\DeclareMathOperator{\Rm}{Rm}
\newcommand*{\rom}[1]{\rm {\expandafter\@slowromancap\romannumeral #1@}}
\def\XXint#1#2#3{{\setbox0=\hbox{$#1{#2#3}{\int}$ }
\vcenter{\hbox{$#2#3$ }}\kern-.6\wd0}}
\protected\def\vts{%
  \ifmmode
    \mskip0.5\thinmuskip
  \else
    \ifhmode
      \kern0.08334em
    \fi
  \fi
}
\numberwithin{equation}{section}
\newtheorem{Theorem}{Theorem}[section]
\newtheorem{Proposition}[Theorem]{Proposition}
\newtheorem{Lemma}[Theorem]{Lemma}
\newtheorem{Corollary}[Theorem]{Corollary}
\theoremstyle{definition}
\newtheorem{Definition}[Theorem]{Definition}
\title{Local smooth convergence of $\mathbb{F}$-limit flows}
\author{Pak-Yeung Chan\footnote{Pak-Yeung Chan's research is supported by EPSRC grant EP/T019824/1.}, Zilu Ma, Yongjia Zhang\footnote{Yongjia Zhang's research is partially supported by Shanghai Sailing Program 23YF1420400 and Research Start-up Fund of SJTU WH220407110.}}
\date{}
\begin{document}


\maketitle
\begin{abstract}
The metric flow is introduced and extensively studied by Bamler \cite{Bam20b, Bam20c}, especially as an $\mathbb{F}$-limit of a sequence of smooth Ricci flows with uniformly bounded Nash entropy, in which case each regular point on the limit is a point of smooth convergence. In this note, we shall consider the $\mathbb{F}$-convergence of a sequence of $\mathbb{F}$-limit flows, and, like Bamler, show that each regular point on the limit is also a point of smooth convergence. The main result will be applied in a forthcoming work of the authors \cite{CMZ23a}.
\end{abstract}

\bigskip 

\section{Introduction}



The metric flow \cite{Bam20b,Bam20c} is a reasonable generalization of the Ricci flow. A metric flow that arises as an $\mathbb{F}$-limit of smooth Ricci flows with uniformly bounded Nash entropy is in particular worthy of detailed study, since it is a generalization of the classical singularity model, and is essential to extending the Ricci flow beyond singularities. In view of the fact that such  metric flow provides an important category for the Ricci flow singularity analysis, it shall be called the \emph{ $\mathbb{F}$-limit flow} by us, whose definition will be presented shortly.

Because of the massive content of \cite{Bam20a,Bam20b, Bam20c}, and of the lengthy definitions of the notions encountered in this note, such as, metric flows, $\mathbb{F}$-convergence, and convergence within a correspondence, we shall not introduce any of them in details, but assume  the reader's  familiarity with the definitions and notations in Bamler's three papers. All notations which we adopt are the same as Bamler's, and all definitions mentioned can be found in \cite{Bam20a,Bam20b, Bam20c}.

Throughout this note, we assume that $I$ is a left-open right-closed interval of the form $(-T,S]$, where $T\le +\infty$, $S<+\infty$. We shall use the capital letter $\mathcal{M}$ (sometimes with indices) to represent a Ricci flow, namely, a space-time produce $M\times I$ with a Riemannian metric evolving according to the Ricci flow equation. The lowercase letters $x$, $y$, etc., always represent space-time points in a Ricci flow or points in a metric flow. The letter $\mathfrak{t}$ stands for the time function. For instance, if $x=(a_0,t_0)\in\mathcal{M}=M\times I$, then $\mathfrak{t}(x)=t_0$. As \cite{Bam20b}, the notations $\mathcal{X}_t$ and $\mathcal{X}_{I_0}$ represent a time-slice and a time-slab of the metric flow $\mathcal{X}$, respectively; the same notations are also applied to Ricci flows. All smooth Ricci flows under our consideration are complete and have bounded curvature on compact time-intervals.

\begin{Definition}[$\mathbb{F}$-limit flow]\label{F-limit-flow}
    A metric flow $\big(\mathcal{X},(\mu_t)_{t\in I'}\big)$ over interval $I$, where $I'\subset I$ and $|I\setminus I'|=0$, is called an $n$-dimensional \emph{$\mathbb{F}$-limit flow}, if there is a sequence of Ricci flows $\{\mathcal{M}^i\}_{i=1}^\infty$ over $I$ and a sequence of points $x^i_0\in\mathcal{M}^i_{\sup I}$, such that
    \begin{gather}\label{limiting-definition}
        \big(\mathcal{M}^i,(\nu^i_{x^i_0;t})_{t\in I}\big)\xrightarrow[i\to\infty]{\quad\mathbb{F}\quad}\big(\mathcal{X},(\mu_t)_{t\in I'}\big).
    \end{gather}
    If $I$ is unbounded, then the convergence is assumed to be on compact time intervals.

    If, in addition to \eqref{limiting-definition}, there are positive numbers $\tau>0$ and $Y<+\infty$, such that
    \begin{gather}
        \mathcal{N}_{x_0^i}(\tau)\ge-Y,
    \end{gather}
    then $\big(\mathcal{X},(\mu_t)_{t\in I'}\big)$ is called \emph{$(\tau, Y)$-noncollapsed}.
\end{Definition}

\textbf{Remark. }\emph{It is to be admitted that our definition of $\mathbb{F}$-limit flow is a bit too restrictive. Indeed, in a more reasonably general definition, each Ricci flow $\mathcal{M}^i$ in \eqref{limiting-definition} should live in its own time-interval $I^i=(-T^i,S]$ with $T^i\to T$, or, even more generally, $I^i=(-T^i,S^i]$ with $T^i\to T$ and $S^i\to S$. However, since smooth convergence point is a local notion, one may always restrict all flows to a common interval, and Theorem \ref{main-theorem} still holds with the more general definition. In particular, we remark that this does not affect our application in \cite{CMZ23a}.}
\\

According to \cite[\S 7]{Bam20b}, the space of $n$-dimensional $\mathbb{F}$-limit flows is also sequentially compact with respect to the $\mathbb{F}$-topology. We shall consider a sequence of $\mathbb{F}$-limit flows, which converges in the $\mathbb{F}$-sense to a metric flow. The limit is naturally an $\mathbb{F}$-limit flow. Our main result is that, if the sequence consists of $(\tau,Y)$-noncollapsed $\mathbb{F}$-limit flows, then the convergence is smooth in the sense of \cite[Theorem 9.31]{Bam20b} wherever the limit is smooth.

\begin{Theorem}[Main Theorem]\label{main-theorem}
    Let $\big(\mathcal{X},(\mu_t)_{t\in I'}\big)$ be an $n$-dimensional $\mathbb{F}$-limit flow over interval $I$. Assume that there is a sequence of $n$-dimensional $(\tau,Y)$-noncollapsed $\mathbb{F}$-limit metric flows $\left\{\big(\mathcal{X}^i,(\mu^i_t)_{t\in I^{',i}}\big)\right\}_{i=1}^\infty$ over $I$, where $\tau>0$ and $Y<+\infty$ are some constants independent of $i$, such that
    \begin{align}\label{the-convergence-in-question}
        \big(\mathcal{X}^i,(\mu^i_t)_{t\in I^{',i}}\big)\xrightarrow[i\to\infty]{\quad\mathbb{F}\quad}\big(\mathcal{X},(\mu_t)_{t\in I'}\big).
    \end{align}
    Then the above convergence is smooth in the sense of \cite[Theorem 9.31]{Bam20b} on $\mathcal{R}_{(-\infty,\sup I)\cap I}\subset\mathcal{X}$. In other words, for each $y\in\mathcal{R}_{(-\infty,\sup I)\cap I}\subset\mathcal{X}$, we have $y\in\mathcal{R}^*\subset\mathcal{X}$.
\end{Theorem}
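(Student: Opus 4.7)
My plan is to reduce the theorem to Bamler's smooth convergence theorem \cite[Theorem 9.31]{Bam20b} via a diagonal extraction. Since each $\mathbb{F}$-limit flow $\mathcal{X}^i$ is itself an $\mathbb{F}$-limit of smooth Ricci flows with uniform Nash entropy lower bound, I would pass to a diagonal sequence of smooth Ricci flows that $\mathbb{F}$-converges directly to $\mathcal{X}$, apply Bamler's theorem to that diagonal sequence, and then transport the resulting smooth convergence back to the sequence $\{\mathcal{X}^i\}$.

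Concretely, for each $i$ I fix a sequence of smooth Ricci flows $\{\mathcal{M}^{i,j}\}_{j}$ with basepoints $x^{i,j}_0\in\mathcal{M}^{i,j}_{\sup I}$ that $\mathbb{F}$-converges to $\mathcal{X}^i$ as $j\to\infty$; the $(\tau,Y)$-noncollapsing hypothesis together with the continuity of the Nash entropy under $\mathbb{F}$-convergence forces $\mathcal{N}_{x^{i,j}_0}(\tau)\ge -Y-o_j(1)$, so the Nash entropy stays uniformly bounded below. I realize each convergence $\mathcal{M}^{i,j}\to\mathcal{X}^i$ within a correspondence $\mathfrak{C}_i$, the convergence $\mathcal{X}^i\to\mathcal{X}$ within a correspondence $\mathfrak{C}_\infty$, and amalgamate them into one large correspondence $\mathfrak{C}$. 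After exhausting $\mathcal{X}$ by compact time intervals, a standard diagonal extraction yields $j(i)\to\infty$ such that $\mathcal{M}^{i,j(i)}\to\mathcal{X}$ in the $\mathbb{F}$-sense within $\mathfrak{C}$ on compact time intervals, with uniform Nash entropy lower bound.

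Bamler's Theorem 9.31 applied to this diagonal sequence then provides, at any $y\in\mathcal{R}_{(-\infty,\sup I)\cap I}\subset\mathcal{X}$, diffeomorphisms $\psi^i$ from parabolic neighborhoods $P^i\subset\mathcal{M}^{i,j(i)}$ onto a fixed parabolic neighborhood $P\subset\mathcal{X}$ of $y$, realizing smooth convergence and in particular keeping the curvature on $P^i$ uniformly bounded. On the other hand, the convergence $\mathcal{M}^{i,j(i)}\to\mathcal{X}^i$ is, by the same theorem, smooth at every regular point of $\mathcal{X}^i$; the uniform curvature bound on $P^i$, combined with noncollapsing, identifies a corresponding regular parabolic neighborhood $Q^i\subset\mathcal{X}^i$ together with a diffeomorphism $\phi^i:Q^i\to P^i$ realizing that convergence. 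Composing, the maps $\psi^i\circ\phi^i:Q^i\to P$ should exhibit $y$ as a point of smooth convergence for the original sequence $\{\mathcal{X}^i\}\to\mathcal{X}$, i.e.\ $y\in\mathcal{R}^*$.

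The main obstacle is the consistency of the three different correspondences: I must check that the composition $\psi^i\circ\phi^i$ actually realizes smooth convergence \emph{within the originally prescribed correspondence} $\mathfrak{C}_\infty$, and not merely within some ad hoc one built from the diagonal argument. This should follow from carefully amalgamating $\mathfrak{C}_i$, $\mathfrak{C}_\infty$, and $\mathfrak{C}$ by means of the triangle inequality for $\mathbb{F}$-distance, so that both smooth convergences live inside a single joint correspondence whose conjugate heat flow identifications match up in the limit; the composition then inherits the desired smooth convergence structure in the sense of \cite[Definition 9.30, Theorem 9.31]{Bam20b}.
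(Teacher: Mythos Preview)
Your diagonal extraction is fine, and Bamler's Theorem~9.31 applied to $\mathcal{M}^{i,j(i)}\to\mathcal{X}$ does give uniform curvature (and conjugate heat kernel) bounds on parabolic neighborhoods $P^i\subset\mathcal{M}^{i,j(i)}$ around $y^{i,j(i)}$. The gap is the next step, where you assert that ``the uniform curvature bound on $P^i$, combined with noncollapsing, identifies a corresponding regular parabolic neighborhood $Q^i\subset\mathcal{X}^i$.'' The space $\mathcal{X}^i$ is the $\mathbb{F}$-limit of the \emph{entire} sequence $\{\mathcal{M}^{i,j}\}_j$, not of the single term $\mathcal{M}^{i,j(i)}$; a curvature bound on one approximant says nothing about the regularity of $\mathcal{X}^i$ near $y^i$. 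The mechanism that does transfer such bounds to the limit---\cite[Lemma~9.33]{Bam20b}---needs curvature \emph{and conjugate heat kernel lower bounds} along a sequence converging to $\mathcal{X}^i$, and your diagonal supplies only one $j$ per $i$. Nor can you produce $\phi^i$ by citing smooth convergence of $\mathcal{M}^{i,j}\to\mathcal{X}^i$ ``at every regular point of $\mathcal{X}^i$'', since whether $y^i$ is regular is exactly what you are trying to prove.

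The paper closes this gap by arranging, for each large $i$, bounds along \emph{infinitely many} $j$. It uses diagonal contradiction arguments (close in spirit to yours) together with the continuity of the Nash entropy and of the conjugate heat kernel under $\mathbb{F}$-convergence to show that $\mathcal{N}_{y^i_j}(r^2)\ge-\varepsilon$ and $u^i_{j,t_0}(y^i_j)>\tfrac12 c_0$ hold for infinitely many $j$. The $\varepsilon$-regularity theorem \cite[Theorem~10.3]{Bam20a} then yields uniform curvature and volume bounds around $y^i_j$ for those $j$, and now \cite[Lemma~9.33]{Bam20b} applied to the $j$-sequence produces the desired uniform regular neighborhood of $y^i$ in $\mathcal{X}^i$, whence $y\in\mathcal{R}^*$ directly from \cite[Definition~9.30]{Bam20b}. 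So your diagonal idea does appear in the argument, but only as a tool to propagate quantitative bounds to infinitely many $j$; the transfer of regularity to $\mathcal{X}^i$ still has to go through the full approximating sequence.
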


 \textbf{Remark.} \emph{In our setting, there are always multiple convergences considered. For instance, the defining convergence \eqref{limiting-definition} and the convergence \eqref{the-convergence-in-question} studied in the main theorem may not have the same set of smooth convergence. Whenever necessary, we shall use subindices to indicate the convergence in question. For example, $\mathcal{R}^*_{\eqref{the-convergence-in-question}}\subset\mathcal{X}$ represents the points of smooth convergence in \eqref{the-convergence-in-question}.}

\begin{Corollary}
    Let $\mathcal{M}$ be an $n$-dimensional Ricci flow over interval $I$. $(\mu_t)_{t\in I}$ is a conjugate heat flow on $\mathcal{M}$. Assume that $\{(\mathcal{X}^i,(\mu_t)_{t\in I^{',i}})\}_{i=1}^\infty$ is a sequence of $n$-dimensional $(\tau,Y)$-noncollapsed $\mathbb{F}$-limit flows over $I$ converging to $(\mathcal{M},(\mu_t)_{t\in I})$ in the $\mathbb{F}$-sense, then the convergence is locally smooth on $\mathcal{M}_{(-\infty,\sup I)\cap I}$. Precisely, there are open sets $U_i\subset \mathcal{M}$ satisfying $U_1\subset U_2\subset U_3\subset\hdots$ and $\cup_{i=1}^\infty U_i= \mathcal{M}_{(-\infty,\sup I)\cap I}$, time-preserving diffeomorphisms 
    \begin{align*}
        \psi^i: U_i\to V_i\subset\mathcal{R}^i\subset\mathcal{X}^i,
    \end{align*}
    and a sequence of positive numbers $\varepsilon_i\searrow 0$, such that all properties in \cite[Theorem 9.31(a)---(f)]{Bam20b} hold. In other words, each $\psi^i$ is $\varepsilon_i$-close to an isometry.
    
\end{Corollary}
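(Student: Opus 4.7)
The plan is to deduce the corollary from Theorem~\ref{main-theorem}: the only missing ingredient is that the pair $(\mathcal{M},(\mu_t)_{t\in I})$ must itself be exhibited as a $(\tau,Y)$-noncollapsed $\mathbb{F}$-limit flow before the main theorem is applicable. By Definition~\ref{F-limit-flow}, each $\mathcal{X}^i$ is the $\mathbb{F}$-limit (on compact time-subintervals if $I$ is unbounded) of some sequence of smooth Ricci flows $\bigl(\mathcal{M}^{i,j},(\nu^{i,j}_{x_0^{i,j};t})_{t\in I}\bigr)_{j=1}^\infty$ with base points satisfying $\mathcal{N}_{x_0^{i,j}}(\tau)\ge -Y$. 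Fixing a metrization of the $\mathbb{F}$-topology (which exists by Bamler's construction) and choosing, for each $i$, an index $j(i)$ with $\mathbb{F}$-distance from $\mathcal{M}^{i,j(i)}$ to $\mathcal{X}^i$ less than $1/i$, a diagonal argument combined with \eqref{the-convergence-in-question} and the triangle inequality shows that $\bigl(\mathcal{M}^{i,j(i)},(\nu^{i,j(i)}_{x_0^{i,j(i)};t})_{t\in I}\bigr)$ $\mathbb{F}$-converges to $(\mathcal{M},(\mu_t)_{t\in I})$. Since the Nash-entropy bound is inherited, $(\mathcal{M},(\mu_t)_{t\in I})$ is itself an $n$-dimensional $(\tau,Y)$-noncollapsed $\mathbb{F}$-limit flow.

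Next I apply Theorem~\ref{main-theorem} to the convergence \eqref{the-convergence-in-question} with $\mathcal{X}:=\mathcal{M}$. The conclusion is that the convergence is smooth in the sense of \cite[Theorem 9.31]{Bam20b} on $\mathcal{R}_{(-\infty,\sup I)\cap I}\subset \mathcal{M}$. Because $\mathcal{M}$ is a smooth Ricci flow with bounded curvature on compact time-intervals, every space-time point is a regular point of the underlying metric flow, so $\mathcal{R}_{(-\infty,\sup I)\cap I}=\mathcal{M}_{(-\infty,\sup I)\cap I}$. Hence every point of $\mathcal{M}_{(-\infty,\sup I)\cap I}$ is a point of smooth convergence with respect to \eqref{the-convergence-in-question}.

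It remains to assemble this pointwise information into a nested exhaustion $U_1\subset U_2\subset\cdots$ equipped with time-preserving diffeomorphisms $\psi^i:U_i\to V_i\subset\mathcal{R}^i$ satisfying (a)--(f) of \cite[Theorem 9.31]{Bam20b}. Exhaust $\mathcal{M}_{(-\infty,\sup I)\cap I}$ by relatively compact open sets $W_1\Subset W_2\Subset\cdots$. Using the smooth-convergence conclusion at each point of $W_m$, together with compactness and the chart-gluing procedure standard in the proof of \cite[Theorem 9.31]{Bam20b}, for every $m$ and every $\varepsilon>0$ there exists $i_0=i_0(m,\varepsilon)$ such that for $i\ge i_0$ a single time-preserving diffeomorphism $W_m\to\mathcal{R}^i$ exists which is $\varepsilon$-close to an isometry and fulfils the remaining clauses of \cite[Theorem 9.31]{Bam20b}. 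A Cantor diagonal choice of $(m(i),\varepsilon_i)$ with $m(i)\to\infty$ and $\varepsilon_i\searrow 0$ then delivers the required $(U_i,\psi^i,\varepsilon_i)$. The only nontrivial step is the diagonal upgrade used to promote $(\mathcal{M},(\mu_t)_{t\in I})$ to an $\mathbb{F}$-limit flow; the invocation of Theorem~\ref{main-theorem} and the final packaging are then routine.
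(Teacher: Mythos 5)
The paper states this Corollary without proof, treating it as an immediate consequence of Theorem~\ref{main-theorem}; your argument supplies the intended deduction correctly. The crucial observation you identify --- that $(\mathcal{M},(\mu_t)_{t\in I})$ must first be exhibited as an $\mathbb{F}$-limit flow before Theorem~\ref{main-theorem} applies, which does not follow from the constant sequence since $\mu_t$ is an arbitrary conjugate heat flow rather than a conjugate heat kernel --- is exactly the nontrivial point, and your diagonal argument using the $\mathbb{F}$-pseudometric and the inherited Nash entropy bound handles it. The identification $\mathcal{R}_{(-\infty,\sup I)\cap I}=\mathcal{M}_{(-\infty,\sup I)\cap I}$ for a smooth Ricci flow with bounded curvature on compact time-intervals is also correct. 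One small simplification: once Theorem~\ref{main-theorem} gives $\mathcal{R}^{*}=\mathcal{M}_{(-\infty,\sup I)\cap I}$ for the convergence \eqref{the-convergence-in-question} realized within a correspondence, the nested exhaustion $U_1\subset U_2\subset\cdots$ with $\bigcup U_i=\mathcal{R}^{*}$, the time-preserving diffeomorphisms $\psi^i$, and the $\varepsilon_i\searrow 0$ satisfying properties (a)--(f) are precisely the content of \cite[Theorem 9.31]{Bam20b} (which, as used elsewhere in this paper, applies to sequences of metric flows, not just smooth Ricci flows); your hand-rolled chart-gluing plus Cantor diagonal re-derivation of that packaging is more elaborate than needed, but not incorrect.
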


\section{Regular points on $\mathbb{F}$-limit flows}

The following theorem is a generalization of \cite[Theorem 10.4]{Bam20a}.  It says that if the metric flow is regular enough at a point, then the Nash entropy cannot be too far away from zero. Recall that the Nash entropy is defined for a $(\tau, Y)$-noncollapsed $\mathbb{F}$-limit flow since the singular set has zero measure. The Nash entropy is also continuous with respect to $\mathbb{F}$-convergence according to \cite[Theorem 2.10]{Bam20c}.

\begin{Theorem}\label{theorem-nash-control}
    For any $\varepsilon>0$ and any $Y<+\infty$, there is a positive number $\delta(Y,\varepsilon)>0$, depending also on the dimension $n$, with the following property. Let $\big(\mathcal{X},(\mu_t)_{t\in I'}\big)$ be an $n$-dimensional $(\tau_0,Y_0)$-noncollapsed  $\mathbb{F}$-limit flow for some $\tau_0>0$ and $Y_0<+\infty$. For any $x\in\mathcal{X}_{(-\infty,\sup I)\cap I}$ and any $r>0$, if $[\mathfrak{t}(x)-r^2,\mathfrak{t}(x)]\subset I$, $P^-(x;r)\Subset\mathcal{R}$ is unscathed, and
    \begin{gather*}
        |{\Rm}|\le r^{-2}\quad\text{on}\quad P^-(x;r),
        \\
        \mathcal{N}_x(r^2)\ge-Y,
    \end{gather*}
    then we have 
    $$\mathcal{N}_x(\delta r^2)\ge -\varepsilon.$$
\end{Theorem}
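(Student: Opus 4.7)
The plan is to reduce this statement to its smooth Ricci flow counterpart, \cite[Theorem 10.4]{Bam20a}, via the approximating sequence defining $\mathcal{X}$ as an $\mathbb{F}$-limit flow. By Definition~\ref{F-limit-flow} there exist smooth Ricci flows $\mathcal{M}^i$ with base points $x_0^i$ such that $(\mathcal{M}^i,\nu^i_{x_0^i;\cdot})\xrightarrow[i\to\infty]{\mathbb{F}}(\mathcal{X},\mu_\cdot)$. Bamler's smooth convergence theorem \cite[Theorem 9.31]{Bam20b} asserts that every point of $\mathcal{R}\subset\mathcal{X}$ is a point of smooth convergence in this sequence, so since $P^-(x;r)\Subset\mathcal{R}$, a standard open-cover argument produces time-preserving diffeomorphic embeddings $\psi^i$ of a slight enlargement of $P^-(x;r)$ into $\mathcal{M}^i$ which converge $C^\infty$-smoothly to an isometry. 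Set $x^i:=\psi^i(x)$.

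The smooth convergence then transfers the hypotheses to each $\mathcal{M}^i$. Fix $\eta\in(0,1)$. For all $i$ large enough, $P^-(x^i;(1-\eta)r)\subset\mathcal{M}^i$ is unscathed and satisfies $|\Rm|\le(1+\eta)r^{-2}$. To transfer the Nash entropy bound we need $\mathcal{N}_{x^i}^{\mathcal{M}^i}(r^2)\to \mathcal{N}_x^{\mathcal{X}}(r^2)$; this should follow because the smooth convergence on $P^-(x;r)$ identifies $\mu_{x;\cdot}$ as the $\mathbb{F}$-limit of the shifted conjugate heat kernels $\nu^i_{x^i;\cdot}$, after which \cite[Theorem 2.10]{Bam20c} delivers continuity of $\mathcal{N}$. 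By monotonicity we then get $\mathcal{N}_{x^i}^{\mathcal{M}^i}((1-\eta)^2 r^2)\ge -Y-1$ for all large $i$.

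Now I apply the smooth Ricci flow version \cite[Theorem 10.4]{Bam20a} to each $\mathcal{M}^i$ at $x^i$ with radius $(1-\eta)r$, obtaining $\delta_0=\delta_0(Y+1,\varepsilon/2,n)>0$ with
\begin{gather*}
\mathcal{N}_{x^i}^{\mathcal{M}^i}\!\bigl(\delta_0(1-\eta)^2 r^2\bigr)\ge -\varepsilon/2.
\end{gather*}
Passing $i\to\infty$ and again invoking continuity of Nash entropy yields $\mathcal{N}_x(\delta_0(1-\eta)^2 r^2)\ge -\varepsilon/2>-\varepsilon$, so $\delta:=\delta_0(1-\eta)^2$ works. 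The main technical step I expect to require care is the second paragraph: \cite[Theorem 2.10]{Bam20c} is naturally phrased for the distinguished base points of the $\mathbb{F}$-convergence, and one must promote it to an arbitrary regular base point arising from smooth convergence. This should follow from $\nu^i_{x^i;\cdot}\to\mu_{x;\cdot}$ in the $\mathbb{F}$-sense, which in turn is a consequence of the smooth convergence on $P^-(x;r)$ together with the uniform tightness forced by the $(\tau_0,Y_0)$-noncollapsing hypothesis.
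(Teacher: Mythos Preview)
Your approach is essentially the same as the paper's: approximate by smooth flows, transfer the curvature bound and Nash entropy bound to the approximants via the local smooth convergence on $P^-(x;r)$, apply \cite[Theorem~10.4]{Bam20a} there, and pass back using continuity of the Nash entropy \cite[Theorem~2.10]{Bam20c}. One small difference: the paper produces the approximating points $x^i$ via \cite[Theorem~6.45]{Bam20b} (convergence within the correspondence) and then uses the diffeomorphisms $\psi_i$ only to check $\psi_i^{-1}(x^i)\to x$ and hence obtain the curvature bound at $x^i$; you instead set $x^i:=\psi^i(x)$ directly. Both choices are fine, since \cite[Theorem~9.31(d)]{Bam20b} gives $\psi^i(x)\to x$ strictly within $\mathfrak{C}$, which is exactly the hypothesis \cite[Theorem~2.10]{Bam20c} needs---so your worry in the last paragraph is already handled by that part of Theorem~9.31, and no separate tightness argument is required. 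The only citation you might tighten is that the identification $\mathcal{R}=\mathcal{R}^*$ for the defining sequence is \cite[Theorem~2.5]{Bam20c}, not part of \cite[Theorem~9.31]{Bam20b} itself; the paper invokes this explicitly.
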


\begin{proof}
    This theorem follows from a limiting argument and the continuity of the Nash entropy with respect to the $\mathbb{F}$-convergence \cite[Theorem 2.10]{Bam20c}.  Let us assume that the interval $I$ has finite length. The more general case follows easily from our argument, and is left to the reader.

    For any $\varepsilon>0$, $Y<+\infty$, we pick $\delta=\frac{1}{4}\delta_0(\varepsilon,2Y)>0$, where $\delta_0(\cdot,\cdot)$ is the constant in the statement of \cite[Theorem 10.4]{Bam20a}.
     Let $\mathcal{M}^i$ be a sequence of $n$-dimensional Ricci flows and $x_0^i\in\mathcal{M}^i_{\sup I}$ a sequence of points as in \eqref{limiting-definition}, such that
    \begin{gather*}
        \mathcal{N}_{x^i_0}(\tau_0)\ge-Y_0,
        \\
        \big(\mathcal{M}^i,(\nu_{x^i_0;t})_{t\in I}\big)\xrightarrow[i\to\infty]{\quad \mathbb{F}\quad }\big(\mathcal{X},(\mu_t)_{t\in I'}\big).
    \end{gather*}

    Since $\mathbb{F}$-convergence is equivalent to convergence within a correspondence (c.f. \cite[\S 5]{Bam20b}), let us pick a correspondence $\mathfrak{C}=(Z_t,\varphi^i_t)$ over $I$ between $\big(\mathcal{M}^i,(\nu_{x^i_0;t})_{t\in I}\big)$ and $\big(\mathcal{X},(\mu_t)_{t\in I'}\big)$, such that
\begin{align}\label{extranonsense10}
    \big(\mathcal{M}^i,(\nu_{x^i_0;t})_{t\in I}\big)\xrightarrow[i\to\infty]{\quad \mathbb{F},\ \mathfrak{C}\quad }\big(\mathcal{X},(\mu_t)_{t\in I'}\big).
\end{align}
By \cite[Theorem 6.45]{Bam20b}, there are points $x^i\in\mathcal{M}^i_{\mathfrak{t}(x)}$, such that
\begin{align*}
    x^i\xrightarrow[i\to\infty]{\quad\mathfrak{C}\quad} x.
\end{align*}

On the other hand, since \cite[Theorem 2.5]{Bam20c} implies that $P^-(x;r)\Subset\mathcal{R}=\mathcal{R}^*_{\eqref{extranonsense10}}$, we can apply \cite[Theorem 9.31]{Bam20b} to find an open set $P^-(x;r)\Subset U\subset\mathcal{R}^*$ and time-preserving diffeomorphisms
\begin{align*}
    \psi_i: U\rightarrow V^i\subset \mathcal{M}^i
\end{align*}
with all the properties in \cite[Theorem 9.31]{Bam20b}. In particular, we have $\psi_i^{-1}(x^i)\rightarrow x$ and
\begin{align}\label{extranonsense1}
    |{\Rm^i}|\le 4r^{-2}\qquad \text{ on }\qquad P^-(x^i;\tfrac{1}{2}r)
\end{align}
for all $i$ large enough.

By the continuity of the Nash entropy \cite[Theorem 2.10]{Bam20c}, we have that $\displaystyle\lim_{i\to\infty}\mathcal{N}_{x^i}(r^2)=\mathcal{N}_x(r^2)\ge -Y$. Thus
\begin{align}\label{extranonsense2}
    \mathcal{N}_{x^i}\big(\tfrac{1}{4}r^2\big)\ge -2Y
\end{align}
for all $i$ large enough.

Applying \cite[Theorem 10.4]{Bam20a} at each $x^i$ using \eqref{extranonsense1} \eqref{extranonsense2}, we have
\begin{align*}
    \mathcal{N}_{x^i}(\tfrac{1}{4}\delta_0r^2)\ge -\varepsilon
\end{align*}
for all $i$ large enough, where $\delta_0=\delta_0(\varepsilon,2Y)$ is the constant in \cite[Theorem 10.4]{Bam20a}. Finally, from the continuity of the Nash entropy \cite[Theorem 2.10]{Bam20c} again and our choice of $\delta$, the conclusion follows.

\end{proof}

\section{Proof of the main theorem}

\subsection{Setting up}

For the sake of simplicity, we assume that the left-open interval $I\subset \mathbb{R}$ is of finite length. The other case  can be easily carried out similarly, since regular point and point of smooth convergence are both local notions.

Let $\big\{(\mathcal{X}^i,(\mu^i_t)_{t\in I^{',i}})\big\}_{i=1}^\infty$ and $(\mathcal{X},(\mu_t)_{t\in I'})$ be the flow pairs over $I$ as in the statement of Theorem \ref{main-theorem}. By Definition \ref{F-limit-flow}, we can find $n$-dimensional smooth Ricci flows $\mathcal{M}^i_j$ over $I$, each with complete time-slices and bounded curvature on compact time-intervals, and points $x^i_j\in \mathcal{M}^i_{j,\sup I}$, such that for each $i$ 
\begin{gather}
    \big(\mathcal{M}^i_j,(\nu_{x^i_j;t})_{t\in I}\big)\xrightarrow[j\to\infty]{\quad\mathbb{F}\quad} (\mathcal{X}^i,(\mu^i_t)_{t\in I^{',i}}),
    \\
    \mathcal{N}_{x^i_j}(\tau)>-Y. \label{noncollapsing-assumption}
\end{gather}

Since $\mathbb{F}$-convergence is equivalent to convergence within a correspondence, we can find a correspondence $\mathfrak{C}=(Z_t,\varphi^i_t)_{t\in I}$ over $I$ between $(\mathcal{X}^i,(\mu^i_t)_{t\in I^{',i}})$ ($i\in\mathbb{N}$) and $(\mathcal{X},(\mu_t)_{t\in I'})$, and for each $i$ a correspondence $\mathfrak{C}^i=(Z^i_t,\varphi^i_{j,t})_{t\in I}$ over $I$ between $\big(\mathcal{M}^i_j,(\nu_{x^i_j;t})_{t\in I}\big)$ ($j\in\mathbb{N}$) and $(\mathcal{X}^i,(\mu^i_t)_{t\in I^{',i}})$, satisfying
\begin{gather}\label{convergence-1}
\big(\mathcal{M}^i_j,(\nu_{x^i_j;t})_{t\in I}\big)\xrightarrow[j\to\infty]{\quad\mathfrak{C}^i,\ \mathbb{F}\quad} (\mathcal{X}^i,(\mu^i_t)_{t\in I^{',i}})
\\
(\mathcal{X}^i,(\mu^i_t)_{t\in I^{',i}}) \xrightarrow[i\to\infty]{\quad\mathfrak{C},\ \mathbb{F}\quad}   (\mathcal{X},(\mu_t)_{t\in I'}).\label{convergence-2}
\end{gather}
To avoid the notational ambiguity, we let $\varphi^i_{\infty,t}:\mathcal{X}^i_t\to Z^i_t$ be the isometric embedding associated with $\mathfrak{C}^i$, and $\varphi^i_t:\mathcal{X}^i_t\to Z_t,\  \varphi_t^{\infty}:\mathcal{X}_t\to Z_t$  be the ones associated with $\mathfrak{C}$.


We will also pass to subsequence whenever necessary, by resorting to the argument in \cite{Bam20c}, namely, if for each subsequence there is a further subsequence with some convergence property, then the same can be predicated of the original sequence. For instance, if for any subsequence of $\{(\mathcal{X}^i,\mu^i_t)\}$, there is a further subsequence, such that $y\in\mathcal{X}$ is a point of smooth convergence in \eqref{convergence-2}, then $y$ is a point of smooth convergence for the original sequence. Thus, by passing to subsequences, we may assume that all convergences in \eqref{convergence-1} and \eqref{convergence-2} are almost always time-wise on $I$ (c.f. \cite[Theorem 7.6]{Bam20b}).

Next, let $y\in\mathcal{R}_{I\cap(-\infty,\sup I)}\subset\mathcal{X}_{I\cap(-\infty,\sup I)}$ be a point in the \emph{regular part} of $\mathcal{X}$. Let $$t_0:=\mathfrak{t}(y).$$
By the definition of $\mathcal{R}\subset \mathcal{X}$, we can find $r_0>0$, such that $P^-(y;r_0)\Subset\mathcal{R}$ is unscathed and
\begin{gather*}
[t_0-r_0^2,t_0+r_0^2]\subset I, 
\\
|{\Rm}|\le r_0^{-2}\quad\text{ on }\quad P^-(y;r_0).
\end{gather*} 
By the left-openness of $I$, we pick $\inf I<t_1=t_0-r_0^2<t_2<t_0$, and slightly adjust $r_0$ if necessary, such that all the convergences in \eqref{convergence-1} and \eqref{convergence-2} are time-wise at both $t_1$ and $t_2$. According to \cite[Theorem 6.45]{Bam20b} and \cite[Theorem 6.25(b)]{Bam20b}, we can choose points $y^i\in\mathcal{X}^i_{t_0}$ and $y^i_j\in\mathcal{M}^i_{j,t_0}$, such that
\begin{eqnarray}
y^i_j\xrightarrow[j\to\infty]{\quad \mathfrak{C}^i,\ \{t_1,t_2\}\quad} y^i;\qquad y^i\xrightarrow[i\to\infty]{\quad \mathfrak{C},\ \{t_1,t_2\}\quad} y.
\end{eqnarray}

\begin{Lemma}
    There is a constant $Y_1$, such that for each $i$, $\mathcal{N}_{y^i_j}(r_0^2)\ge -Y_1$ whenever $j$ is large enough.
\end{Lemma}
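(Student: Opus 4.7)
The plan is to apply \cite[Theorem 2.10]{Bam20c}, the continuity of Nash entropy under $\mathbb{F}$-convergence with converging basepoints, in two layers: first to \eqref{convergence-2} to compare $\mathcal{N}_{y^i}(r_0^2)$ with the finite quantity $\mathcal{N}_y(r_0^2)$, and then, for each fixed $i$, to \eqref{convergence-1} to compare $\mathcal{N}_{y^i_j}(r_0^2)$ with $\mathcal{N}_{y^i}(r_0^2)$. A simple diagonal argument then produces the uniform constant $Y_1$.

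More concretely, I will first observe that $\mathcal{N}_y(r_0^2)$ is a well-defined finite real number: $y\in\mathcal{R}\subset\mathcal{X}$, $t_1=t_0-r_0^2\in I$, and $\mathcal{X}$ inherits $(\tau,Y)$-noncollapsedness as the $\mathbb{F}$-limit of a sequence of $(\tau,Y)$-noncollapsed $\mathbb{F}$-limit flows, so its singular set has zero measure and the Nash entropy is defined in the usual Bamler framework. Applying \cite[Theorem 2.10]{Bam20c} to \eqref{convergence-2} at the basepoints $y^i\to y$, where the required strictness is supplied by the time-wise convergence at $t_1$ and $t_2$, yields $\mathcal{N}_{y^i}(r_0^2)\to\mathcal{N}_y(r_0^2)$. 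In particular, there exists $i_0$ such that $\mathcal{N}_{y^i}(r_0^2)\ge\mathcal{N}_y(r_0^2)-1$ for all $i\ge i_0$. Next, for each fixed $i$, the uniform noncollapsing \eqref{noncollapsing-assumption} of $\{\mathcal{M}^i_j\}_j$ together with \cite[Theorem 2.10]{Bam20c} applied to \eqref{convergence-1} at the basepoints $y^i_j\to y^i$ gives $\mathcal{N}_{y^i_j}(r_0^2)\to\mathcal{N}_{y^i}(r_0^2)$ as $j\to\infty$. For $i\ge i_0$ this chains to $\mathcal{N}_{y^i_j}(r_0^2)\ge\mathcal{N}_y(r_0^2)-2$ once $j$ is large enough; for the finitely many indices $i<i_0$, each individual limit $\mathcal{N}_{y^i}(r_0^2)$ is a finite real number, contributing only finitely many additional lower bounds. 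Setting
\[
Y_1:=\max\Bigl\{2-\mathcal{N}_y(r_0^2),\ \max_{i<i_0}\bigl(1-\mathcal{N}_{y^i}(r_0^2)\bigr)\Bigr\}
\]
produces the required uniform constant.

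The main delicate point is verifying the hypotheses of \cite[Theorem 2.10]{Bam20c} in each layer. This requires both the relevant noncollapsedness (already in hand from \eqref{noncollapsing-assumption} and the standing assumptions on $\mathcal{X}^i$) and enough basepoint convergence to control the conjugate heat kernels at the time $t_1=t_0-r_0^2$ at which the Nash entropy is evaluated. This is precisely why $t_1,t_2\in(t_1,t_0)$ were selected as times of time-wise $\mathbb{F}$-convergence and why the basepoints $y^i$ and $y^i_j$ were constructed via \cite[Theorems 6.25(b) and 6.45]{Bam20b} to converge at those two times. Once that verification is in place the rest is a routine diagonal continuity argument.
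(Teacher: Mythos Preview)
Your route is genuinely different from the paper's. The paper never invokes Nash entropy continuity here; instead it works entirely on the smooth flows $\mathcal{M}^i_j$. Using the time-wise convergence at $t_2$ in both layers, it bounds
\[
d^{\mathcal{M}^i_{j,t_2}}_{W_1}\big(\nu_{x^i_j;t_2},\nu_{y^i_j;t_2}\big)\le C
\]
uniformly in $i,j$ (by chaining through $d^{\mathcal{X}^i_{t_2}}_{W_1}(\mu^i_{t_2},\nu_{y^i;t_2})$ and the finite quantity $d^{\mathcal{X}_{t_2}}_{W_1}(\mu_{t_2},\nu_{y;t_2})$), combines this with the bound $\mathcal{N}_{x^i_j}(t_0-t_1)\ge -Y'$ coming from \eqref{noncollapsing-assumption} and \cite[Proposition~5.2]{Bam20a}, and then applies the Nash entropy Harnack \cite[Corollary~5.11]{Bam20a} on each smooth $\mathcal{M}^i_j$ to transfer the bound from $x^i_j$ to $y^i_j$. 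This buys a proof that uses only estimates on smooth Ricci flows.

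Your argument is cleaner in spirit but has one genuine gap: in the first layer you apply \cite[Theorem~2.10]{Bam20c} to the convergence \eqref{convergence-2}, which is a convergence of $\mathbb{F}$-limit \emph{metric} flows rather than of smooth Ricci flows. As stated, \cite[Theorem~2.10]{Bam20c} is formulated for smooth Ricci flows converging in $\mathbb{F}$; indeed, the paper's own Proposition~\ref{local-regularity} later goes through the trouble of combining correspondences and extracting a diagonal \emph{smooth} sequence $\mathcal{M}^{i_k}_{j_k}\to\mathcal{X}$ precisely so that \cite[Theorem~2.10]{Bam20c} can be invoked. Your first layer thus presupposes an extension of Nash entropy continuity to the category of $\mathbb{F}$-limit flows, which is part of what this paper is building towards rather than something already available off the shelf. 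Your second layer (fixed $i$, smooth $\mathcal{M}^i_j\to\mathcal{X}^i$) is fine; to repair the first you would need to insert a diagonal-sequence argument of the type used in Proposition~\ref{local-regularity}, at which point the paper's Harnack-based approach is arguably more direct.
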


\begin{proof}
By the definition of time-wise $\mathbb{F}$-convergence within a correspondence, we have
\begin{align*}   \lim_{i\to\infty}d^{Z_{t_2}}_{W_1}\big((\varphi^i_{t_2})_*\mu^i_{t_2},(\varphi^\infty_{t_2})_*\mu_{t_2}\big)=0;\qquad \lim_{j\to\infty}d^{Z^i_{t_2}}_{W_1}\big((\varphi^i_{j,t_2})_*\nu_{x^i_j;t_2},(\varphi^i_{\infty,t_2})_*\mu^i_{t_2}\big)=0.
\end{align*}
By the definition of convergence of points within a correspondence \cite[Definition 6.16, Definition 6.18]{Bam20b}, we also have
\begin{align*}
\lim_{i\to\infty}d^{Z_{t_2}}_{W_1}\big((\varphi^i_{t_2})_*\nu_{y^i;t_2},(\varphi^\infty_{t_2})_*\nu_{y;t_2}\big)=0;\qquad \lim_{j\to\infty}d^{Z^i_{t_2}}_{W_1}\big((\varphi^i_{j,t_2})_*\nu_{y^i_j;t_2},(\varphi^i_{\infty,t_2})_*\nu_{y^i;t_2}\big)=0.   
\end{align*}
Thus, we have the following distance estimates
\begin{align*}
\lim_{i\to\infty} d^{\mathcal{X}^i_{t_2}}_{W_1}\big(\nu_{y^i;t_2},\mu^i_{t_2}\big)
    =&\ \lim_{i\to\infty} d^{Z_{t_2}}_{W_1}
\big((\varphi^i_{t_2})_*\nu_{y^i;t_2};(\varphi^i_{t_2})_*\mu^i_{t_2}\big)
    \\
    \le &\ \lim_{i\to\infty}\bigg(d^{Z_{t_2}}_{W_1}\big((\varphi^i_{t_2})_*\mu^i_{t_2},(\varphi^\infty_{t_2})_*\mu_{t_2}\big)+(d^{Z_{t_2}}_{W_1}\big((\varphi^i_{t_2})_*\nu_{y^i;t_2},(\varphi^\infty_{t_2})_*\nu_{y;t_2}\big)\bigg)
    \\
    &\ +d^{Z_{t_2}}_{W_1}\big((\varphi^\infty_{t_2})_*\mu_{t_2},(\varphi^\infty_{t_2})_*\nu_{y;t_2}\big)
    \\
    = &\ d_{W_1}^{\mathcal{X}_{t_2}}\big(\mu_{t_2},\nu_{y;t_2}\big)
    \\
    <&\ \infty
\end{align*} 
and
\begin{align*}
&\lim_{j\to\infty} d^{\mathcal{M}^i_{j,t_2}}_{W_1}\big(\nu_{y^i_j;t_2},\nu_{x^i_j;t_2}\big)\\
    =&\ \lim_{j\to\infty} d^{Z^i_{t_2}}\big((\varphi^i_{j,t_2})_*\nu_{y^i_j;t_2};(\varphi^i_{j,t_2})_*\nu_{x^i_j;t_2}\big)
    \\
    \le &\ \lim_{j\to\infty}\bigg(d^{Z^i_{t_2}}_{W_1}\big((\varphi^i_{j,t_2})_*\nu_{x^i_j;t_2},(\varphi^i_{\infty,t_2})_*\mu^i_{t_2}\big)+(d^{Z^i_{t_2}}_{W_1}\big((\varphi^i_{j,t_2})_*\nu_{y^i_j;t_2},(\varphi^i_{\infty,t_2})_*\nu_{y^i;t_2}\big)\bigg)
    \\
    &\ +d^{Z^i_{t_2}}_{W_1}\big((\varphi^i_{\infty,t_2})_*\mu^i_{t_2},(\varphi^i_{\infty,t_2})_*\nu_{y^i;t_2}\big)
    \\
    = &\ d_{W_1}^{\mathcal{X}^i_{t_2}}\big(\mu^i_{t_2},\nu_{y^i;t_2}\big).
\end{align*}
Thus, by taking away finitely many terms from each sequence $\{\mathcal{M}^i_j\}_{j=1}^\infty$, we may assume that there is a constant $C$ such that
\begin{align}\label{nonsense1}  d^{\mathcal{M}^i_{j,t_2}}_{W_1}\big(\nu_{x^i_j;t_2}, \nu_{y^i_j;t_2}\big)\le C\qquad\text{ for all $i$ and for all $j$}. 
\end{align}

On the other hand, by the noncollapsing assumption \eqref{noncollapsing-assumption}, the standard scalar curvature lower bound, and \cite[Proposition 5.2]{Bam20a}, we have 
\begin{align}\label{nonsense2}
    \mathcal{N}_{x^i_j}(t_0-t_1)\ge -Y'.
\end{align}
The lemma follows from applying \cite[Corollary 5.11]{Bam20a} with conditions \eqref{nonsense1} and \eqref{nonsense2}.
\end{proof}

\subsection{Local regularity along the converging points}

Let $Y_0=\min\{Y_1,\mathcal{N}_y(r_0^2)\}$. Let $\varepsilon=\delta'(\frac{1}{2})$, where $\delta'$ is the constant in \cite[Theorem 10.3]{Bam20a}, and $\delta=\delta(\frac{1}{2}\varepsilon,Y_0)>0$ be the constant in Theorem \ref{theorem-nash-control}. Henceforth, we shall define $$r:=\delta^{\frac{1}{2}}r_0$$ and we have, by Theorem \ref{theorem-nash-control},
\begin{align}\label{nonsense0}
    \mathcal{N}_y(r^2)\ge-\tfrac{1}{2}\varepsilon.
\end{align}

\begin{Proposition}\label{local-regularity}
    For each $i$ large enough, there are infinitely many $j$, such that
    \begin{align*}
        \mathcal{N}_{y^i_j}(r^2)\ge -\varepsilon.
    \end{align*}
\end{Proposition}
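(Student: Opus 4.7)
The plan is to establish Proposition \ref{local-regularity} by two successive applications of the continuity of the Nash entropy with respect to $\mathbb{F}$-convergence (\cite[Theorem 2.10]{Bam20c}), in exactly the same spirit as used in the proof of Theorem \ref{theorem-nash-control}. Recall that the Nash entropy is well-defined on each $(\tau,Y)$-noncollapsed $\mathbb{F}$-limit flow since the singular set has measure zero, and the scale $r^2 = \delta r_0^2 < r_0^2 = t_0 - t_1$ is admissible because $t_1 \in I$. The base case is already in hand: from \eqref{nonsense0} we have $\mathcal{N}_y(r^2) \ge -\tfrac{1}{2}\varepsilon$.

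The first step is to transfer this estimate from $y \in \mathcal{X}$ to $y^i \in \mathcal{X}^i$. Since $(\mathcal{X}^i,\mu^i_t) \xrightarrow{\mathbb{F},\,\mathfrak{C}} (\mathcal{X},\mu_t)$ by \eqref{convergence-2}, and the converging points satisfy $y^i \xrightarrow{\mathfrak{C}} y$ (with time-wise convergence at $t_1$ and $t_2$), the continuity of the Nash entropy yields
\begin{align*}
\lim_{i \to \infty} \mathcal{N}_{y^i}(r^2) \;=\; \mathcal{N}_y(r^2) \;\ge\; -\tfrac{1}{2}\varepsilon.
\end{align*}
In particular, for all sufficiently large $i$, say $i \ge i_0$, we obtain $\mathcal{N}_{y^i}(r^2) \ge -\tfrac{3}{4}\varepsilon$.

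The second step is to transfer the bound from $y^i$ to $y^i_j$ for each fixed $i \ge i_0$. Since $\big(\mathcal{M}^i_j,(\nu_{x^i_j;t})_{t \in I}\big) \xrightarrow{\mathbb{F},\,\mathfrak{C}^i} (\mathcal{X}^i,\mu^i_t)$ by \eqref{convergence-1}, and $y^i_j \xrightarrow{\mathfrak{C}^i} y^i$, the same continuity statement applied at the $i$-th level gives
\begin{align*}
\lim_{j \to \infty} \mathcal{N}_{y^i_j}(r^2) \;=\; \mathcal{N}_{y^i}(r^2) \;\ge\; -\tfrac{3}{4}\varepsilon,
\end{align*}
so that $\mathcal{N}_{y^i_j}(r^2) \ge -\varepsilon$ for all $j$ sufficiently large, depending on $i$; in particular for infinitely many $j$, as claimed.

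The only conceptual subtlety is the use of Nash entropy continuity along an $\mathbb{F}$-convergence of (possibly singular) $\mathbb{F}$-limit flows in the first step. This is however the very extension already invoked at the end of the proof of Theorem \ref{theorem-nash-control}, and it may be justified either directly from \cite[Theorem 2.10]{Bam20c} or by a standard diagonal argument that realizes $\mathcal{X}$ as an $\mathbb{F}$-limit of suitably chosen smooth Ricci flows $\mathcal{M}^i_{j(i)}$. Beyond that, everything is a bookkeeping exercise with no new ingredient required.
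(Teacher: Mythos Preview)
Your argument is essentially correct and takes a more direct route than the paper. The paper argues by contradiction: assuming the conclusion fails along a subsequence $\{i_k\}$, it picks indices $j_k$ and \emph{combines} the correspondences $\mathfrak{C}^{i_k}$ and $\mathfrak{C}$ (via \cite[Lemma 5.15]{Bam20b}) into a single correspondence $\tilde{\mathfrak{C}}$ so that the smooth Ricci flows $\mathcal{M}^{i_k}_{j_k}$ converge directly to $\mathcal{X}$ with $y^{i_k}_{j_k}\to y$; then a single application of \cite[Theorem 2.10]{Bam20c} yields $\mathcal{N}_y(r^2)\le -\varepsilon$, contradicting \eqref{nonsense0}. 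In other words, the paper never applies Nash-entropy continuity along a sequence of singular $\mathbb{F}$-limit flows, only along smooth ones.

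Your two-step argument is conceptually cleaner, but be aware that your first step does require the continuity of $\mathcal{N}$ along the sequence $\mathcal{X}^i\to\mathcal{X}$ of (possibly singular) $\mathbb{F}$-limit flows, and---contrary to your remark---this is \emph{not} what is invoked at the end of the proof of Theorem~\ref{theorem-nash-control}: there the approximating flows $\mathcal{M}^i$ are always smooth. Your suggested diagonal fix (realize $\mathcal{X}$ as an $\mathbb{F}$-limit of suitable $\mathcal{M}^i_{j(i)}$ via combined correspondences) is exactly right, and once written out it is precisely the paper's proof. So the trade-off is: your version is shorter if one is willing to take the extended continuity statement for granted; the paper's version is self-contained in that it only uses \cite[Theorem 2.10]{Bam20c} as literally stated.
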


\begin{proof}
We argue by contradiction. Suppose the proposition is false, then we can find a sequence $\{i_k\}_{k=1}^\infty$ of counterexamples, such that for each $k$, there are infinitely many $j$ with $\mathcal{N}_{y^{i_k}_j}(r^2)<-\varepsilon$. By our convergence assumptions, and by passing $i_k$ to a subsequence if necessary, we may pick $j_k$ accordingly with the following properties (we write $(\mu_t)_{t\in I'}$ concisely as $\mu_t$ to avoid the notational baggage).
\begin{enumerate}
    \item $\displaystyle \mathcal{N}_{y^{i_k}_{j_k}}(r^2)<-\varepsilon$;
    \item $\displaystyle d^{\mathfrak{C}}_{\mathbb{F}}\big((\mathcal{X}^{i_k},\mu^{i_k}_t),(\mathcal{X},\mu_t)\big)\le \tfrac{1}{k},\qquad  d^{\mathfrak{C}^{i_k}}_{\mathbb{F}}\bigg(\big(\mathcal{M}^{i_k}_{j_k},\nu_{x^{i_k}_{j_k};t}\big),(\mathcal{X}^{i_k},\mu^{i_k}_t)\bigg)\le \tfrac{1}{k}$;
    \item $\displaystyle d^{\mathfrak{C}_{I\cap(-\infty,t_0]}}_{W_1}\bigg((\mathcal{X}_{I\cap(-\infty,t_0]},\nu_{y;t}),(\mathcal{X}^{i_k}_{I\cap(-\infty,t_0]},\nu_{y^{i_k};t})\bigg)\le\frac{1}{k}$,
    \\
    $\displaystyle  d^{\mathfrak{C}^{i_k}_{I\cap(-\infty,t_0]}}_{W_1}\bigg((\mathcal{M}^{i_k}_{j_k,I\cap(-\infty,t_0]},\nu_{y^{i_k}_{j_k};t}),(\mathcal{X}^{i_k}_{I\cap(-\infty,t_0]},\nu_{y^{i_k};t})\bigg)\le\frac{1}{k}$.
\end{enumerate}
The third property above follows from \cite[Theorem 6.40]{Bam20b}.

Next, for each $k$, regarding $\mathfrak{C}^{i_k}$, $\mathfrak{C}$ as correspondences between $\mathcal{M}^{i_k}_{j_k}$ and $\mathcal{X}^{i_k}$, $\mathcal{X}^{i_k}$ and $\mathcal{X}$, respectively, (and omitting many other flows that are also within these correspondences), we combine them into one correspondence $\tilde{\mathfrak{C}}^k$ by applying \cite[Lemma 5.15]{Bam20b}. Note that all the correspondence in our consideration are defined on the same time interval $I$, this avoid many nuances in the definition of the $\mathbb{F}$-distances within a correspondence; one can easily carry this to more general cases. It is easy to verify that
\begin{align*}
    d^{\tilde{\mathfrak{C}}^k}_{\mathbb{F}}\bigg(\big(\mathcal{M}^{i_k}_{j_k},\nu_{x^{i_k}_{j_k};t}\big),(\mathcal{X},\mu_t)\bigg)\le&\  d^{\tilde{\mathfrak{C}}^k}_{\mathbb{F}}\bigg(\big(\mathcal{M}^{i_k}_{j_k},\nu_{x^{i_k}_{j_k};t}\big),(\mathcal{X}^{i_k},\mu^{i_k}_t)\bigg)+d^{\tilde{\mathfrak{C}}^k}_{\mathbb{F}}\bigg((\mathcal{X}^{i_k},\mu^{i_k}_t),(\mathcal{M},\mu_t)\bigg)
    \\
    =&\ d^{\mathfrak{C}^{i_k}}_{\mathbb{F}}\bigg(\big(\mathcal{M}^{i_k}_{j_k},\nu_{x^{i_k}_{j_k};t}\big),(\mathcal{X}^{i_k},\mu^{i_k}_t)\bigg)+d^{\mathfrak{C}}_{\mathbb{F}}\bigg((\mathcal{X}^{i_k},\mu^{i_k}_t),(\mathcal{M},\mu_t)\bigg)
    \\
    \le&\ \frac{2}{k}.
\end{align*}
In like manner, we also have
\begin{align*}
    d^{\tilde{\mathfrak{C}}^k_{I\cap(-\infty,t_0]}}_{W_1}\bigg(\big(\mathcal{M}^{i_k}_{j_k,I\cap(-\infty,t_0]},\nu_{y^{i_k}_{j_k};t}\big),(\mathcal{M}_{I\cap(-\infty,t_0]},\nu_{y;t})\bigg)\le\frac{2}{k}.
\end{align*}
Combining all the $\tilde{\mathfrak{C}}^k$ into one correspondence $\tilde{\mathfrak{C}}$ between $\big(\mathcal{M}^{i_k}_{j_k},\nu_{x^{i_k}_{j_k};t}\big)$ ($k\in\mathbb{N}$) and $(\mathcal{M},\mu_t)$ by way of applying the proof of \cite[Lemma 2.13]{Bam20b} at each $t\in I$, we obtain the following convergence within correspondence $\tilde{\mathfrak{C}}$:
\begin{gather*}   \big(\mathcal{M}^{i_k}_{j_k},\nu_{x^{i_k}_{j_k};t}\big)\xrightarrow[\quad k\to\infty\quad]{\mathbb{F},\ \tilde{\mathfrak{C}}}(\mathcal{X},\mu_t)
\\
y^{i_k}_{j_k}\xrightarrow[\quad k\to\infty\quad]{\tilde{\mathfrak{C}}} y.
\end{gather*}

Finally, by \cite[Theorem 2.10]{Bam20c} and the contradictory assumption $\mathcal{N}_{y^{i_k}_{j_k}}(r^2)<-\varepsilon$, we have that $\mathcal{N}_y(r^2)\le -\varepsilon$. This is a contradiction against \eqref{nonsense0}.
\end{proof}


\subsection{Local conjugate heat flow bound along the converging points}

By Proposition \ref{local-regularity}, for each $i$ we may pass $j$ to a subsequence, such that
\begin{align*}
    \mathcal{N}_{y^i_j}(r^2)\ge-\varepsilon\qquad \text{for all}\qquad j\in\mathbb{N},
\end{align*}
whenever $i$ is large enough. Write $d\mu_t:=u_tdg_t$ on $\mathcal{R}\subset\mathcal{X}$ and $d\nu_{x^i_j;t}:=u^i_{j,t}dg_t$. Define
\begin{align*}
    c_0:=u_{t_0}(y)>0.
\end{align*}
The following proposition follows from a similar argument as  Proposition \ref{local-regularity}.

\begin{Proposition}\label{local-noncollapsing}
    For each $i$ large enough, there are infinitely many $j$, such that
    \begin{align*}
        u^i_{j,t_0}(y^i_j)>\tfrac{1}{2}c_0.
    \end{align*}
\end{Proposition}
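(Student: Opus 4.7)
The plan is to mimic the proof of Proposition \ref{local-regularity} but to upgrade the conclusion from Nash entropy continuity to smooth convergence of conjugate heat kernels, which requires combining the Nash entropy lower bound produced by that proposition with Bamler's $\varepsilon$-regularity theorem \cite[Theorem 10.3]{Bam20a} and the smooth convergence theorem \cite[Theorem 9.31]{Bam20b}.

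Suppose towards a contradiction that the proposition fails; then we extract a diagonal subsequence $i_k,j_k\to\infty$ with $u^{i_k}_{j_k,t_0}(y^{i_k}_{j_k})\le\tfrac{1}{2}c_0$. Exactly as in the proof of Proposition \ref{local-regularity}, by combining the correspondences $\mathfrak{C}$ and $\mathfrak{C}^{i_k}$ we produce a single correspondence $\tilde{\mathfrak{C}}$ over $I$ with
\begin{align*}
    \big(\mathcal{M}^{i_k}_{j_k},\nu_{x^{i_k}_{j_k};t}\big)\xrightarrow[k\to\infty]{\quad\mathbb{F},\ \tilde{\mathfrak{C}}\quad}(\mathcal{X},\mu_t),\qquad y^{i_k}_{j_k}\xrightarrow[k\to\infty]{\quad\tilde{\mathfrak{C}}\quad}y.
\end{align*}
A further application of Proposition \ref{local-regularity} allows us, after passing to a subsequence, to assume $\mathcal{N}_{y^{i_k}_{j_k}}(r^2)\ge -\varepsilon$ for every $k$.

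Since $\varepsilon=\delta'(\tfrac{1}{2})$ was chosen as the constant from \cite[Theorem 10.3]{Bam20a}, this Nash entropy lower bound delivers a uniform unscathedness and curvature estimate $|{\Rm}|\le\tfrac{1}{2}r^{-2}$ on each parabolic neighborhood $P\big(y^{i_k}_{j_k};\varepsilon^{-1}r,-(1-\varepsilon)r^2,\varepsilon^{-1}r^2\big)$. Combined with the convergence $y^{i_k}_{j_k}\xrightarrow{\tilde{\mathfrak{C}}}y$, this places $y$ in the locus $\mathcal{R}^*_{\tilde{\mathfrak{C}}}\subset\mathcal{X}$ of smooth convergence \cite[Definition 9.30, Theorem 9.31]{Bam20b}. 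Hence \cite[Theorem 9.31]{Bam20b} furnishes time-preserving diffeomorphisms from a neighborhood of $y$ into the smooth Ricci flows $\mathcal{M}^{i_k}_{j_k}$ under which the metrics converge in $C^\infty_{\mathrm{loc}}$; by \cite[Theorem 2.5]{Bam20c} the conjugate heat kernel densities $u^{i_k}_{j_k,t}$ in turn converge smoothly to $u_t$ on this neighborhood.

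Evaluating this smooth convergence at time $t_0$ along the converging centers yields
\begin{align*}
    \lim_{k\to\infty}u^{i_k}_{j_k,t_0}\big(y^{i_k}_{j_k}\big)=u_{t_0}(y)=c_0,
\end{align*}
contradicting $u^{i_k}_{j_k,t_0}(y^{i_k}_{j_k})\le\tfrac{1}{2}c_0$. The main delicacy is the last step: because the points $y^{i_k}_{j_k}$ only converge in the correspondence sense, to pass to pointwise limits of the kernel we must first pull the points back via the diffeomorphisms of \cite[Theorem 9.31]{Bam20b} and use the $\tilde{\mathfrak{C}}$-convergence together with the $C^\infty_{\mathrm{loc}}$ convergence of $u^{i_k}_{j_k,t}$ to $u_t$ on the regular part.
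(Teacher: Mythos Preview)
Your argument is correct, but it takes a detour that the paper avoids. Once you have constructed the diagonal correspondence $\tilde{\mathfrak{C}}$ and the convergence $\big(\mathcal{M}^{i_k}_{j_k},\nu_{x^{i_k}_{j_k};t}\big)\xrightarrow{\mathbb{F},\tilde{\mathfrak{C}}}(\mathcal{X},\mu_t)$ of smooth noncollapsed Ricci flows, \cite[Theorem 2.5]{Bam20c} already gives $\mathcal{R}=\mathcal{R}^*_{\tilde{\mathfrak{C}}}$ outright. Since $y\in\mathcal{R}$ by the very setup of the section, you immediately have $y\in\mathcal{R}^*_{\tilde{\mathfrak{C}}}$, and then \cite[Theorem 9.31(a)(c)]{Bam20b} yields $u^{i_k}_{j_k,t_0}(y^{i_k}_{j_k})\to u_{t_0}(y)=c_0$ and the contradiction. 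The paper does exactly this, in three lines. Your invocation of Proposition~\ref{local-regularity} and the $\varepsilon$-regularity theorem \cite[Theorem 10.3]{Bam20a} to manufacture curvature bounds on the $\mathcal{M}^{i_k}_{j_k}$ is therefore redundant: you are re-proving, in this special case, what \cite[Theorem 2.5]{Bam20c} hands you for free. (Also, a small misattribution: the smooth convergence of the conjugate heat kernel densities on $\mathcal{R}^*$ is \cite[Theorem 9.31(c)]{Bam20b}, not \cite[Theorem 2.5]{Bam20c}.) Your longer route is not wrong, and it does make the mechanism more visible, but the paper's route is the cleaner one here.
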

\begin{proof}[Sketch of proof] Arguing in the same way as the proof of Proposition \ref{local-regularity}, suppose that we can find a subsequence $\{i_k\}_{k=1}^\infty$, such that for each $i_k$, there are infinitely many $j$ with 
\begin{align*}
    u^{i_k}_{j,t_0}(y^{i_k}_j)\le \tfrac{1}{2}c_0.
\end{align*}
Then we may choose $j_k$ for each $k$ and combine $\mathfrak{C}^{i_k}$ and $\mathfrak{C}$ into a correspondence $\tilde{\mathfrak{C}}$ just like the proof of Proposition \ref{local-regularity}, such that
\begin{gather}   \big(\mathcal{M}^{i_k}_{j_k},\nu_{x^{i_k}_{j_k};t}\big)\xrightarrow[\quad k\to\infty\quad]{\mathbb{F},\ \tilde{\mathfrak{C}}}(\mathcal{X},\mu_t)\label{extranonsense3}
\\
y^{i_k}_{j_k}\xrightarrow[\quad k\to\infty\quad]{\tilde{\mathfrak{C}}} y,\label{extranonsense05}
\\
u^{i_k}_{j_k,t_0}(y^{i_k}_{j_k})\le \tfrac{1}{2}c_0.\label{extranonsense4}
\end{gather}
Since $y\in\mathcal{R}=\mathcal{R}^*_{\eqref{extranonsense3}}\subset\mathcal{X}$ by \cite[Theorem 2.5]{Bam20c}.  \cite[Theorem 9.31(a)(c)]{Bam20b} and \eqref{extranonsense05} imply that
\begin{align*}
    \lim_{k\to\infty}u^{i_k}_{j_k,t_0}(y^{i_k}_{j_k})=u_{t_0}(y)=c_0,
\end{align*}
which contradicts \eqref{extranonsense4}; this finishes the proof.
\end{proof}

\subsection{Conclusion}

In summary, applying Proposition \ref{local-regularity} and Proposition \ref{local-noncollapsing} successively, for each $i$ large enough, we may pass each $\{\big(\mathcal{M}^{i}_{j},\nu_{x^{i}_{j};t}\big)\}_{j=1}^\infty$ to a subsequence, such that $\mathcal{N}_{y^i_j}(r^2)\ge-\varepsilon$ and $ u^i_{j,t_0}(y^i_j)>\tfrac{1}{2}c_0$ at the same time. By our assumption on $\varepsilon$, \cite[Theorem 10.3]{Bam20a}, and \cite[Theorem 6.1]{Bam20a}, we have that whenever $i$ is large enough, 
\begin{gather*}
    |{\Rm^i_j}|\le\tfrac{1}{2}r^{-2}\quad\text{on}\quad P(y^i_j;\tfrac{1}{2}r)\qquad \text{ for all } j\in\mathbb{N},
    \\
    |B(y^i_j,\tfrac{1}{2}r)|\ge c(n)(\tfrac{1}{2}r)^n\qquad \text{ for all } j\in\mathbb{N},
    \\
    u^i_{j,t_0}(y^i_j)>\tfrac{1}{2}c_0\qquad \text{ for all } j\in\mathbb{N}.
\end{gather*}

For each fixed $i$ large enough, we apply \cite[Lemma 9.33]{Bam20b} to the sequences $\{\big(\mathcal{M}^{i}_{j},\nu_{x^{i}_{j};t}\big)\}_{j=1}^\infty$ and $\{y^i_j\}_{j=1}^\infty$. Thus, there are positive constant $r^*=r^*(r,c_0)>0$ and $C=C(r,c_0)<+\infty$, independent of $i$, such that
\begin{enumerate}
    \item $\displaystyle P(y^i;r^*)\subset \mathcal{R}^i$ is unscathed,
    \item $\displaystyle |B(y^i,r^*)|\ge 1/C$,
    \item $\displaystyle |{\Rm^i}|\le C$ on $\displaystyle P(y^i;r^*)$,
\end{enumerate}
where the first conclusion is a direct consequence of \cite[Lemma 9.33]{Bam20b} and the last two are due to the definition of smooth convergence. It follows immediately from \cite[Definition 9.30]{Bam20b} that $y\in\mathcal{X}$ is a point of smooth convergence in \eqref{convergence-2}. This finishes the proof of the main theorem.

\bibliographystyle{amsalpha}

\newcommand{\alphalchar}[1]{$^{#1}$}
\providecommand{\bysame}{\leavevmode\hbox to3em{\hrulefill}\thinspace}
\providecommand{\MR}{\relax\ifhmode\unskip\space\fi MR }
\providecommand{\MRhref}[2]{%
  \href{http://www.ams.org/mathscinet-getitem?mr=#1}{#2}
}
\providecommand{\href}[2]{#2}

\noindent Mathematics Institute, Zeeman Building, University of Warwick, Coventry CV4 7AL, UK
\\ E-mail address: \verb"pak-yeung.chan@warwick.ac.uk"
\\

\noindent Department of Mathematics, Rutgers University, Piscataway, NJ 08854, USA
\\ E-mail address: \verb"zilu.ma@rutgers.edu"
\\

\noindent School of Mathematical Sciences, Shanghai Jiao Tong University, Shanghai, China, 200240
\\ E-mail address: \verb"sunzhang91@sjtu.edu.cn"

\end{document}